\title{On a strengthening of J\'onssonness for $\aleph_\omega$}
\author{Monroe Eskew}
\address{Universit\"at Wien \\
Institut f\"ur Mathematik \\
Kurt G\"odel Research Center \\
Augasse 2-6, UZA 1 - Building 2 \\
1090 Wien \\
AUSTRIA}
\date{}
\newtheorem{theorem}{Theorem}
\newtheorem{lemma}[theorem]{Lemma}
\newtheorem{proposition}[theorem]{Proposition}
\newtheorem{corollary}[theorem]{Corollary}
\newtheorem*{question}{Question}
\DeclareMathOperator{\ran}{ran}
\DeclareMathOperator{\ot}{ot}
\DeclareMathOperator{\cf}{cf}
\DeclareMathOperator{\id}{id}
\DeclareMathOperator{\hull}{Hull}
\DeclareMathOperator{\chang}{\twoheadrightarrow}
\newcommand{\la}{\langle}
\newcommand{\ra}{\rangle}
\date{}
\begin{document}
\maketitle

\begin{abstract}
We discuss a system of strengthenings of ``$\aleph_\omega$ is J\'onsson'' indexed by real numbers, and identify a strongest one.  We give a proof of a theorem of Silver and show that there is a barrier to weakening its hypothesis.
\end{abstract}

This note is meant to be self-contained and relies only on basic facts about cardinals, Skolem functions, and the nonstationary ideal.  For background, see \cite{MR2768692}.

An infinite cardinal $\kappa$ is called \emph{J\'onsson} if for every structure $\frak A$ on $\kappa$ in a countable language, there is an elementary $\frak B \prec \frak A$ of size $\kappa$ such that $\frak B \not= \frak A$.  $\aleph_0$ is not J\'onsson, since we may include the function $n \mapsto n-1$ for $n>0$ in the language of a structure on the natural numbers.  It is easy to see that if $\kappa$ is not J\'onsson, then $\kappa^+$ is not J\'onsson, so $\aleph_\omega$ is the least possible J\'onsson cardinal.  The consistency of $\aleph_\omega$ being J\'onsson, relative to large cardinals, is open.

An equivalent way of stating that $\kappa$ is J\'onsson is to say that the set $\{ X \subseteq \kappa : |X| = \kappa$ and $X \not= \kappa \}$ is stationary, which means that for every function $F : [\kappa]^{<\omega} \to \kappa$, there is $X$ in the above set that is closed under $F$.  J\'onsson cardinals are closely connected to Chang's Conjecture:

\begin{proposition}
Suppose $\kappa$ is J\'onsson.  Then there is a regular $\mu<\kappa$ such that the Chang's Conjecture $(\kappa,\mu)\chang(\kappa,{<}\mu)$ holds, which says that $\{ X \subseteq \kappa : |X| = \kappa$ and $|X \cap \mu| < \mu \}$ is stationary.
\end{proposition}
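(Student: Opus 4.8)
The plan is to run the J\'onsson property through a sufficiently rich algebra, read off from each proper subalgebra a regular cardinal whose trace is small, and then collapse these to a single $\mu$ by a pressing-down argument. First I would fix a structure $\mathfrak{A}$ on $\kappa$ in a countable language expanding $(\kappa,<)$ by Skolem functions, the cardinality function $\eta\mapsto|\eta|$, the successor-cardinal function $\lambda\mapsto\lambda^+$, and a function $\eta\mapsto b_\eta$ with $b_\eta$ a bijection between $|\eta|$ and $\eta$ (coded via a fixed well-ordering). By the equivalence recalled above, J\'onssonness says that $S_0=\{X:|X|=\kappa,\ X\ne\kappa,\ X\text{ closed under }\mathfrak{A}\}$ is stationary, being the intersection of the stationary set $\{X:|X|=\kappa,\ X\ne\kappa\}$ with the club of subalgebras of $\mathfrak{A}$. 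It then suffices to produce one regular $\mu<\kappa$ with $|X\cap\mu|<\mu$ for stationarily many $X\in S_0$.

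The heart of the argument is a lemma attaching to each $X\in S_0$ a regular cardinal $\mu_X\in X$ with $|X\cap\mu_X|<\mu_X$. Given such an $X$, let $\gamma=\gamma_X$ be the least ordinal not in $X$, so $X\cap\gamma=\gamma$ and $|\gamma|\subseteq X$. The bijections $b_\eta$ show $X\cap[\gamma,|\gamma|^+)=\emptyset$: an $\eta\in X$ in this interval has $|\eta|=|\gamma|$, and $b_\eta$ would carry $X\cap|\gamma|=|\gamma|$ onto $X\cap\eta$, forcing $\eta\subseteq X$ and hence $\gamma\in X$. Put $\delta=\min(X\setminus\gamma)\in X$; then $\delta\ge|\gamma|^+$, $X\cap\delta=\gamma$, and $\rho:=|\delta|\in X$ satisfies $\rho\ge|\gamma|^+$. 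Set $\mu_X:=\rho^+\in X$, a regular cardinal $<\kappa$. The same use of $b_\eta$ gives $|X\cap\eta|\le|\gamma|$ for every $\eta\in X\cap\rho^+$ (reduce $\eta$ to $\sigma=|\eta|\le\rho\le\delta$ and use $X\cap\sigma\subseteq X\cap\delta=\gamma$); since every $\xi\in X\cap\rho^+$ thus has at most $|\gamma|$ predecessors in $X\cap\rho^+$, we get $\ot(X\cap\rho^+)\le|\gamma|^+\le\rho$, so $|X\cap\mu_X|\le\rho<\mu_X$.

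Finally I would press down. The map $X\mapsto\mu_X$ is regressive on $S_0$, and the club filter in this setting is normal: diagonal intersections of algebra-clubs are again clubs, since one may combine the algebras into a single finitary function that reads the first coordinate of a tuple as an index the subalgebra is required to contain. Hence $X\mapsto\mu_X$ is constant, say equal to $\mu$, on a stationary $S_1\subseteq S_0$, and then $\{X:|X|=\kappa,\ |X\cap\mu|<\mu\}\supseteq S_1$ is stationary, which is exactly the Chang's Conjecture asserted for this regular $\mu<\kappa$.

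The hard part will be the lemma, precisely the tension between making $\mu_X$ small enough to have small trace and keeping $\mu_X\in X$ for the pressing-down step. The transparent witness $|\gamma_X|^+$ has trace $|\gamma_X|$ but can fail to belong to $X$ exactly when $\gamma_X$ is itself a cardinal; replacing it by $\rho^+$ with $\rho=|\min(X\setminus\gamma_X)|$ secures membership, and the counting estimate above is what recovers the small trace. One should also check the boundary when $\kappa=\lambda^+$ is a successor: there $\rho=\lambda$ is impossible, since the trace bound would yield $|X|=|X\cap\kappa|<\kappa$, so indeed $\mu_X=\rho^+<\kappa$.
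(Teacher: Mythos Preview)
Your proof is correct and follows the same overall strategy as the paper's: attach to each proper subalgebra $X$ a regular cardinal $\mu_X\in X$ with $|X\cap\mu_X|<\mu_X$, then press down. The implementations differ in how $\mu_X$ is chosen. The paper works with $N\prec H_\kappa$ and takes $\mu$ to be the \emph{least} cardinal in $N$ with $N\cap\mu\neq\mu$; it then argues, via minimality and a cofinality consideration, that this $\mu$ is regular and that $N\cap\mu$ is bounded below $\mu$. You instead locate the first gap in $X$ (least missing ordinal $\gamma$, then $\delta=\min(X\setminus\gamma)$), observe that $\delta$ is a cardinal, and set $\mu_X=|\delta|^+$, bounding the trace with explicit bijections. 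In fact your $\delta$ coincides with the paper's $\mu$ (since $|\delta|\in X\cap(\gamma,\delta]$ forces $|\delta|=\delta$), so your $\mu_X$ is exactly one cardinal higher; your choice buys automatic regularity as a successor cardinal and avoids the cofinality argument, while the paper's choice is minimal and the reasoning is a bit shorter via elementarity in $H_\kappa$. One small point: your bound $|X\cap\eta|\le|\gamma|$ requires an injection $\eta\to|\eta|$ available in the structure; this follows from your inclusion of Skolem functions (the inverse of $b_\eta$ is definable), but it would be cleaner to list it explicitly among the functions of $\mathfrak{A}$.
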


\begin{proof}
Let $F: [\kappa]^{<\omega} \to \kappa$ be a function.  Let $M \prec (H_{\kappa},\in,F)$ be of size $\kappa$ with $\kappa \subseteq M$.  Since $\kappa$ is J\'onsson, there is $N \prec M$ such that $| N \cap \kappa | = \kappa$ and $N \cap \kappa \not= \kappa$.  There must be a cardinal $\mu \in N \cap \kappa$ such that $N \cap \mu \not= \mu$.  This is clear if $\kappa$ is a limit cardinal.  If $\kappa = \rho^+$ and $\rho \subseteq N$, then for every $\alpha \in M \cap \kappa$ there is a surjection $f_\alpha : \rho \to \alpha$ in $M$, and thus $\alpha = f_\alpha[\rho] \subseteq N$.   Let $\mu$ be the least cardinal in $N$ such that $N \cap \mu \not= \mu$.  By the minimality of $\mu$, $N \cap \mu$ cannot be cofinal in $\mu$, since otherwise $N \cap \alpha = \alpha$ for all $\alpha \in \mu \cap N$, and thus $\mu \subseteq N$ .  If $\mu$ is singular, then $\cf(\mu) + 1 \subseteq N$, so $N$ is cofinal in $\mu$.  Note that $N \cap \kappa$ is closed under $F$.

For $X \subseteq \kappa$, let  $\mu_X \in X$ be the least cardinal regular cardinal such that $|X \cap \mu| < \mu$.  This is a regressive function defined on a stationary set, so it is constant on a stationary subset by Fodor's Theorem.  Thus for some $\mu < \kappa$, $(\kappa,\mu)\chang(\kappa,{<}\mu)$.
\end{proof}

We can generalize Chang's Conjecture as follows.  Given an index set $I$ and sequences of cardinals $\langle \lambda_i \rangle_{i \in I}$ and $\langle \kappa_i \rangle_{i \in I}$, the notation
$$ \langle \lambda_i \rangle_{i \in I} \twoheadrightarrow \langle \kappa_i \rangle_{i \in I} $$
stands for the following assertion:
Let $\lambda =  \sup_{i \in I} \lambda_i$.  For every $F : [\lambda]^{<\omega} \to \lambda$, there is $X \subseteq \lambda$ closed under $F$ such that $|X \cap \lambda_i |= \kappa_i$ for all $i$.  When only finitely many cardinals $\lambda_0 < ... < \lambda_n$ appear in the sequence, the assertion is conventionally denoted by
\[ (\lambda_n, ..., \lambda_0) \twoheadrightarrow (\kappa_n,...,\kappa_0). \]
Replacing any ``$\kappa_i$'' with ``${<}\kappa_i$'' in the above notation signifies that $| X \cap \lambda_i | < \kappa_i$.  We note some easy implications:

\begin{itemize}
\item If $J \subseteq I$ and $\langle \lambda_i \rangle_{i \in I} \twoheadrightarrow \langle \kappa_i \rangle_{i \in I}$, then $\langle \lambda_i \rangle_{i \in J} \twoheadrightarrow \langle \kappa_i \rangle_{i \in J}$.
\item If $\langle \lambda_i \rangle \twoheadrightarrow \langle \kappa_i \rangle$ and $\langle \kappa_i \rangle \twoheadrightarrow \langle \mu_i \rangle$, then $\langle \lambda_i \rangle \twoheadrightarrow \langle \mu_i \rangle$.
\end{itemize}

\begin{lemma}
\label{cardlem}
Suppose $\lambda$ is a cardinal.  There is a structure $\frak A$ on $\lambda$ in a finite language such that all $\frak B \prec \frak A$ have the property that $\{ \alpha < \lambda : \exists \beta \ot(\frak B \cap \alpha) = \aleph_\beta \}$ contains only cardinals.
\end{lemma}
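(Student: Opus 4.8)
The plan is to arrange that every elementary $\frak B$ is closed under a cardinality function and under a fixed system of bijections witnessing singularity of ordinals, and then to read the set off from the increasing enumeration of $\frak B$. Write $e$ for the increasing enumeration of $\frak B$ and $\theta=\ot(\frak B)$. The function $\alpha\mapsto\ot(\frak B\cap\alpha)$ is nondecreasing and continuous, and for each $\kappa<\theta$ it takes the value $\kappa$ exactly on the interval $[\,\sup_{\xi<\kappa}e(\xi),\,e(\kappa)\,]$. Hence for an infinite cardinal $\kappa$ this interval must collapse to the single point $\{e(\kappa)\}$ with $e(\kappa)$ a cardinal, and the statement becomes the conjunction of: (i) $e(\kappa)$ is a cardinal, and (ii) $e$ is continuous at $\kappa$, i.e. $e(\kappa)=\sup_{\xi<\kappa}e(\xi)$, for every infinite cardinal $\kappa<\theta$ (together with a boundary version of (ii) controlling $\theta$ itself when $\frak B$ is bounded in $\lambda$).

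For (i) I would put into the language the unary function $c(\alpha)=|\alpha|$ and a binary $B$ so that, for every infinite non-cardinal $\alpha$, $B(\alpha,\cdot)$ is a bijection of $|\alpha|$ onto $\alpha$; closing under $c$, $B$ and the inverse of $B$ makes $B(\alpha,\cdot)$ restrict to a bijection of $\frak B\cap|\alpha|$ onto $\frak B\cap\alpha$ whenever $\alpha\in\frak B$. Suppose $\alpha\in\frak B$ is an infinite non-cardinal. Then $|\frak B\cap\alpha|=|\frak B\cap|\alpha||$, while $|\alpha|\in(\frak B\cap\alpha)\setminus(\frak B\cap|\alpha|)$ shows $\frak B\cap|\alpha|$ is a \emph{proper} initial segment of $\frak B\cap\alpha$, so $\ot(\frak B\cap|\alpha|)<\ot(\frak B\cap\alpha)$ though the two sets have equal cardinality. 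If $\ot(\frak B\cap\alpha)$ were an infinite cardinal $\kappa$, this would give $\kappa=|\frak B\cap\alpha|=|\ot(\frak B\cap|\alpha|)|$, hence $\ot(\frak B\cap|\alpha|)\ge\kappa=\ot(\frak B\cap\alpha)$, a contradiction. Thus no non-cardinal $\alpha\in\frak B$ has $\ot(\frak B\cap\alpha)$ an infinite cardinal; taking $\alpha=e(\kappa)$ yields (i), and the same fact shows that a discontinuity of $e$ occurring at a \emph{non}-cardinal value is harmless.

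For (ii) I would add, for each ordinal $\delta$, a function coding a fixed increasing continuous cofinal map $\cf(\delta)\to\delta$, and prove by induction on cardinals $\beta\le\lambda$ the statement $(\dagger)$: if $\frak B\cap\beta$ is infinite and bounded in $\beta$, then $\ot(\frak B\cap\beta)$ is not a cardinal. Granting $(\dagger)$, a discontinuity of $e$ at index $\kappa$ means $\frak B\cap e(\kappa)$ is bounded in $e(\kappa)$; by (i) the point $e(\kappa)$ is a cardinal, so $(\dagger)$ forces $\kappa=\ot(\frak B\cap e(\kappa))$ to be a non-cardinal, which is absurd. Hence $e$ is continuous at every infinite cardinal index, giving (ii); applying $(\dagger)$ at $\beta=\lambda$ likewise makes $\theta$ a non-cardinal whenever $\frak B$ is bounded, disposing of the boundary case. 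In the induction the successor and singular cases should go through: closure under $c$ and $B$ makes $|\frak B\cap\alpha|$ constant along a cofinal set of $\alpha$ below the supremum, and the continuous cofinal maps reduce continuity at a singular $\beta$ to continuity at the smaller regular cardinal $\cf(\beta)$.

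The hard part will be the regular cardinals, i.e. the threat that $\frak B$ meets a regular cardinal $\beta$ in an infinite \emph{bounded} set whose order type is nevertheless a cardinal; the smallest instance is a countable $\frak B$ with $\omega_1\in\frak B$ and $\ot(\frak B\cap\omega_1)=\omega$. I expect to defeat this by exploiting that $\omega$ and every cardinal below $\beta$ sit in $\frak B$ as definable points: together with the bijections, any infinite bounded trace $\frak B\cap\beta$ then contains a cofinal copy of some smaller cardinal together with an extra limit point lying above it, which pushes its order type strictly past that cardinal and so off the cardinals altogether. Making this argument uniform across \emph{all} regular $\beta$ rather than just the first few — and choosing the coding functions so that the inductive hypothesis genuinely transfers upward through the regular cardinals — is the crux of the proof and the step on which the precise design of $\frak A$ depends.
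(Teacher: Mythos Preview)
Your part (i) is essentially the paper's entire proof. The paper puts only two functions into the language: $f(\alpha)=|\alpha|$ and a binary $g$ with $g(\alpha,\cdot)$ an injection of $\alpha$ into $|\alpha|$. It then argues: if $\alpha\in\frak B$ is least with $|\frak B\cap\alpha|=\aleph_\beta$ and $\alpha$ is not a cardinal, then $|\alpha|\in\frak B\cap\alpha$ and $g(\alpha,\cdot)$ restricts to an injection of $\frak B\cap\alpha$ into $\frak B\cap|\alpha|$, contradicting minimality. That is the whole proof; there is no analogue of your (ii).

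You are correct that the set $\{\gamma:\ot(\frak B\cap\gamma)=\kappa\}$ is the interval $[\sup_{\xi<\kappa}e(\xi),e(\kappa)]$, so the literal statement also asks that $e$ be continuous at each infinite cardinal index. The paper does not address this, and in fact it \emph{cannot} be secured by any choice of finite (or even countable) language. If Chang's Conjecture $(\omega_2,\omega_1)\twoheadrightarrow(\omega_1,\omega)$ holds, then for every countable expansion of $(\omega_2,<)$ there is an elementary $\frak B$ with $|\frak B|=\aleph_1$ and $|\frak B\cap\omega_1|=\aleph_0$. The paper's own $g$ then forces $|\frak B\cap\alpha|\le\aleph_0$, hence $\ot(\frak B\cap\alpha)<\omega_1$, for every $\alpha\in\frak B\cap\omega_2$; it follows that $\ot(\frak B\cap\omega_2)=\omega_1$ exactly, while $\frak B$ is bounded below $\omega_2$ by regularity. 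Any $\gamma$ strictly between $\sup\frak B$ and $\omega_2$ is then a non-cardinal with $\ot(\frak B\cap\gamma)=\aleph_1$.

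So the ``hard part'' you isolate at regular cardinals is not merely hard but, consistently, impossible: your inductive scheme for $(\dagger)$ cannot be completed, no matter which cofinal maps you encode. (Incidentally, your ``smallest instance'' $\ot(\frak B\cap\omega_1)=\omega$ never occurs, since $\omega\in\frak B$ already gives $\ot(\frak B\cap\omega_1)\ge\omega+1$; the first genuine obstruction is one level up.) The lemma should be read as asserting its conclusion for $\alpha\in\frak B$ --- equivalently, that each $e(\kappa)$ is a cardinal --- which is exactly what the paper proves and exactly what is needed downstream, since $\chi_M$ is evaluated only at the points $\aleph_m\in M$.
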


\begin{proof}
Let $f : \lambda \to \lambda$ be defined by $f(\alpha) = |\alpha|$, and let $g : \lambda^2 \to \lambda$ be such that for all $\alpha < \lambda$, the function on $\alpha$ given by $\beta \mapsto g(\alpha,\beta)$ is an injection from $\alpha$ to $| \alpha |$.  Let $\frak A$ have $f$ and $g$ in its language.  Suppose $\frak B \prec \frak A$ and  $\aleph_\beta \leq | \frak B|$.  Let $\alpha \in \frak B$ be least such that $| \frak B \cap \alpha | = \aleph_\beta$.  If $\alpha$ is not a cardinal, then $f(\alpha) \in \frak B$ and $f(\alpha) < \alpha$. $g(\alpha,\cdot) : \frak B \cap \alpha \to \frak B \cap f(\alpha)$ is an injection, but $| \frak B \cap f(\alpha)| < \aleph_\beta$, a contradiction.
\end{proof}

%

\begin{lemma}[Folklore]
\label{sups}
For all $\lambda$, there is a structure $\frak A$ on $\lambda$ with a complete set of Skolem functions 
such that all $\frak B \prec \frak A$ have the following property: If $\mu < \kappa$ are in $\frak B$ and $\kappa$ is a regular cardinal, then $\sup(\hull^{\frak A}( \frak B \cup \mu ) \cap \kappa) = \sup(\frak B \cap \kappa)$.
\end{lemma}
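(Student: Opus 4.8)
The plan is to isolate the single direction that has content. Since $\mathfrak B \subseteq \hull^{\mathfrak A}(\mathfrak B \cup \mu)$, the inequality $\sup(\mathfrak B \cap \kappa) \le \sup(\hull^{\mathfrak A}(\mathfrak B \cup \mu) \cap \kappa)$ is automatic, so, writing $\gamma = \sup(\mathfrak B \cap \kappa)$, it suffices to show that every ordinal in $\hull^{\mathfrak A}(\mathfrak B \cup \mu)$ that lies below $\kappa$ is in fact $\le \gamma$. An element of the hull is the value of a Skolem term evaluated at parameters drawn from $\mathfrak B \cup \mu$. The naive approach of inducting on term complexity fails: a subterm may take a value $\ge \kappa$ even when the whole term lands below $\kappa$, so one cannot propagate the bound $\gamma$ through the intermediate stages. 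I would get around this by engineering $\mathfrak A$ so that the hull is computed in a single step and so that the relevant suprema are themselves values of functions of the structure.

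Concretely, I would build the interpreted functions of $\mathfrak A$ as a closure. Start from any structure on $\lambda$ with a complete set of Skolem functions (e.g. $(\lambda,<)$ together with a pairing function, Skolemized), and let $\mathcal C_0$ be its clone of term functions (the functions closed under composition and containing the projections). Given $\mathcal C_n$, for each $h \in \mathcal C_n$ and each partition of its arguments into ``parameter'' variables $\vec x$ and ``varying'' variables $\vec y$, adjoin the function
\[ \bar h(\vec x, \delta, \nu) = \sup\bigl( \{ h(\vec x, \vec y) : \vec y < \delta \} \cap \nu \bigr), \]
adjoin a complete set of Skolem functions for the resulting expansion, and let $\mathcal C_{n+1}$ be the clone generated by all of these; finally set $\mathcal C = \bigcup_n \mathcal C_n$ and let $\mathfrak A = (\lambda, \mathcal C)$, taking $\mathcal C$ as the set of function symbols. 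Since every first-order formula over $\mathcal C$ mentions only finitely many symbols, each appearing in some $\mathcal C_n$, the collection $\mathcal C$ is a complete set of Skolem functions for $\mathfrak A$; and since $\mathcal C$ is a clone, $\hull^{\mathfrak A}(\mathfrak B \cup \mu) = \{ h(\vec a) : h \in \mathcal C,\ \vec a \in (\mathfrak B \cup \mu)^{<\omega} \}$ for any $\mathfrak B \prec \mathfrak A$.

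It then remains to run the estimate. Fix $\mathfrak B \prec \mathfrak A$ and a regular $\kappa$ with $\mu < \kappa$, both in $\mathfrak B$, and take $h(\vec a) < \kappa$ with $h \in \mathcal C$ and $\vec a \in (\mathfrak B \cup \mu)^{<\omega}$. Splitting $\vec a$ into the coordinates lying in $\mathfrak B$, call them $\vec b$, and the remaining coordinates, which then lie in $\mu$, call them $\vec\xi$, we have $h(\vec a) \le \bar h(\vec b, \mu, \kappa)$ for the $\bar h \in \mathcal C$ corresponding to this partition, since $h(\vec a) = h(\vec b,\vec\xi)$ is one of the values of $h(\vec b, \cdot)$ lying below $\kappa$ with argument below $\mu$. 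The point is that $\bar h(\vec b, \mu, \kappa)$ is genuinely below $\kappa$: the set $\{ h(\vec b, \vec y) : \vec y < \mu \}$ has size at most $|\mu| < \kappa$, so its intersection with $\kappa$ is bounded below $\kappa$ by regularity. As $\vec b, \mu, \kappa \in \mathfrak B$ and $\mathfrak B$ is closed under $\bar h$, we get $\bar h(\vec b, \mu, \kappa) \in \mathfrak B \cap \kappa$, whence $h(\vec a) \le \bar h(\vec b, \mu, \kappa) \le \gamma$, as desired.

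The only real obstacle is the one flagged above, namely that intermediate subterm values can overshoot $\kappa$; both features of the construction are aimed squarely at it. Closing the language under composition collapses an arbitrary hull element to a single application $h(\vec a)$, removing the need to track subterms, while truncating the supremum by intersecting with $\nu$ keeps $\bar h(\vec b, \mu, \kappa)$ below $\kappa$ — this is exactly where regularity of $\kappa$ and $\mu < \kappa$ are used. Verifying that the $\omega$-step closure simultaneously yields a clone, a complete set of Skolem functions, and closure under the operation $h \mapsto \bar h$ is routine bookkeeping that I would not belabor.
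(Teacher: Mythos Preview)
Your proof is correct and is essentially the same argument as the paper's: both close the Skolem functions under composition so that hull elements are single function applications, and both adjoin the ``bounded sup'' functions $\bar h(\vec b,\mu,\kappa)=\sup(\{h(\vec b,\vec y):\vec y<\mu\}\cap\kappa)$ (the paper writes this as $f_j(\kappa,\mu,\vec\beta)=\sup\{\gamma<\kappa:\exists\vec\alpha\in[\mu]^{<\omega}\,f_i(\vec\alpha,\vec\beta)=\gamma\}$), then use regularity of $\kappa$ and $\mu<\kappa$ to conclude $\bar h(\vec b,\mu,\kappa)\in\mathfrak B\cap\kappa$. The only difference is that you spell out the $\omega$-step closure explicitly where the paper simply says ``we may assume'' the Skolem functions have the required closure properties.
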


\begin{proof}We may assume we have a complete set of Skolem functions $\langle f_i \rangle_{i < \omega}$ for $\frak A$ which is closed under compositions and has the property that for all $i < \omega$, there is $j < \omega$ such that
$$f_j : (\alpha_0,\alpha_1,\vec \beta) \mapsto  \sup \{ \gamma < \alpha_0 : (\exists \vec \alpha \in [\alpha_1]^{<\omega} ) \ f_i(\vec \alpha,\vec \beta) = \gamma \}.$$
Suppose $\gamma \in \hull^{\frak A}( \frak B \cup \mu) \cap \kappa$.  Then $\gamma = f_i(\vec \alpha, \vec \beta)$ for some $i<\omega$, $\vec \alpha \in [\mu]^{<\omega}$, $\vec \beta \in \frak B^{<\omega}$.  Then there is some $f_j$ as above, so that $\gamma \leq f_j(\kappa,\mu,\vec \beta) \in \frak B$.  
Since $\mu < \kappa$ and $\kappa$ is regular, $f_j(\kappa,\mu,\vec \beta) < \kappa$.  
Thus $\sup(\hull^{\frak A}( \frak B \cup \mu ) \cap \kappa) \leq \sup(\frak B \cap \kappa)$.
\end{proof}

Let $\theta \geq \aleph_\omega$ be a cardinal.  
If $M \prec H_\theta$, then $M$ satisfies the conclusion of Lemma \ref{cardlem} and contains functions witnessing that each $\aleph_n$ is not J\'onsson.  If $|M \cap \aleph_\omega | = \aleph_\omega$, let $\chi_M : \omega \to \omega$ be such that $\chi_M(n)$ is the $m \geq n$ such that  $\aleph_n = \ot(M \cap \aleph_m)$.  Note that $\chi_M(0) = 0$ always, and if $\aleph_\omega \nsubseteq M$, then for all but finitely many $n$, $\chi_M(n) > n$.

Let us call a function $f$ \emph{increasing} if $n<m$ implies $f(n) < f(m)$, and \emph{monotone} if $n < m$ implies $f(n) \leq f(m)$.  Let us say that $\aleph_\omega$ is \emph{$f$-J\'onsson} if the collection $\{ M \prec H_{\aleph_\omega} : | M |= \aleph_\omega $ and $\chi_M = f \}$ is stationary.  Clearly, if $f \not= \id$ and $\aleph_\omega$ is $f$-J\'onsson, then $\aleph_\omega$ is J\'onsson.

The following argument is due to Jack Silver.  It has appeared in other guises in \cite{MR3803069} and \cite{MR520190}.

\begin{theorem}[Silver]
If $\aleph_\omega$ is J\'onsson and $2^{\aleph_0} < \aleph_\omega$, then $\aleph_\omega$ is $f$-J\'onsson for some increasing $f : \omega \to \omega$, $f\not=
id$.  Therefore, there is an increasing sequence of natural numbers $\la n_i \ra_{i<\omega}$ such that $\la \aleph_{n_{i+1}} \ra_{i<\omega} \chang \la \aleph_{n_i} \ra_{i<\omega}$.
\end{theorem}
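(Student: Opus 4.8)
The plan is to realize the promised $f$ as the value of $\chi_M$ on a stationary set of elementary substructures, isolating that value by a pigeonhole that consumes the hypothesis $2^{\aleph_0}<\aleph_\omega$. Fix a large regular $\theta$, so that for $M\prec H_\theta$ the conclusion of Lemma~\ref{cardlem} holds and $M$ is closed under a witness $F_n$ to the non-J\'onssonness of each $\aleph_n$. First I would show that J\'onssonness of $\aleph_\omega$ makes $S=\{M\prec H_\theta:|M\cap\aleph_\omega|=\aleph_\omega\text{ and }\aleph_\omega\not\subseteq M\}$ stationary. Given any $F$, I would pass to the countably many functions on $\aleph_\omega$ obtained by restricting the Skolem terms of $\frak A=(H_\theta,\in,F,\dots)$ to arguments and values below $\aleph_\omega$; a proper $X\subseteq\aleph_\omega$ of size $\aleph_\omega$ closed under all of them exists by J\'onssonness, and then $M=\hull^{\frak A}(X)$ satisfies $M\cap\aleph_\omega=X$, so $M\in S$ and $M$ is closed under $F$.

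Next, for $M\in S$ I would check that $\chi_M$ is total and strictly increasing and that $\chi_M\not=\id$. Totality and the cardinal values follow from Lemma~\ref{cardlem}: the continuous nondecreasing map $\alpha\mapsto\ot(M\cap\alpha)$ assumes every value $\aleph_n<\ot(M\cap\aleph_\omega)$, and Lemma~\ref{cardlem} forces the point where it equals $\aleph_n$ to be a cardinal $\aleph_m$ with $m\geq n$, unique because each $\aleph_m\in M$ separates consecutive cardinal levels. Monotonicity of $\ot(M\cap\cdot)$ turns $\aleph_n<\aleph_{n+1}$ into $\chi_M(n)<\chi_M(n+1)$, so $\chi_M$ is increasing. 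For $\chi_M\not=\id$ I would use the witnesses $F_m$: if $\chi_M(m)=m$ then $\ot(M\cap\aleph_m)=\aleph_m$, so $M\cap\aleph_m$ is a subset of $\aleph_m$ of size $\aleph_m$ closed under $F_m$, forcing $\aleph_m\subseteq M$; if this held for infinitely many $m$ then $\aleph_\omega\subseteq M$, against $M\in S$. Hence $\chi_M(m)>m$ for all but finitely many $m$.

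The crux, and the only place the cardinal arithmetic enters, is the pigeonhole. The map $M\mapsto\chi_M$ takes at most $2^{\aleph_0}$ values, so $S=\bigcup_f S_f$ with $S_f=\{M\in S:\chi_M=f\}$ and at most $2^{\aleph_0}$ nonempty pieces. Because $\aleph_{\omega+1}$ is regular, the club filter on $\p_{\aleph_{\omega+1}}(H_\theta)$ is $\aleph_{\omega+1}$-complete, so a union of at most $2^{\aleph_0}<\aleph_{\omega+1}$ nonstationary sets is nonstationary; as $S$ is stationary, some $S_f$ is stationary. For this $f$ the previous paragraph gives that $f$ is increasing and $f\not=\id$, and since $\chi_M$ depends only on $M\cap\aleph_\omega$ and stationarity projects along $M\mapsto M\cap H_{\aleph_\omega}$, the stationarity of $S_f$ is exactly the statement that $\aleph_\omega$ is $f$-J\'onsson. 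I expect the careful bookkeeping of this completeness argument---keeping the ambient space and the meaning of stationary fixed while splitting into $2^{\aleph_0}$ pieces---to be the main thing to get right.

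Finally I would read off the Chang's Conjecture. Choose $N$ with $f(m)>m$ for $m\geq N$, put $n_0=N$ and $n_{i+1}=f(n_i)$; then $\la n_i\ra_{i<\omega}$ is strictly increasing, hence cofinal in $\omega$, so $\sup_i\aleph_{n_{i+1}}=\aleph_\omega$. Given $F:[\aleph_\omega]^{<\omega}\to\aleph_\omega$, add $F$ to the language and use stationarity of $S_f$ to obtain $M\in S_f$ closed under $F$; set $X=M\cap\aleph_\omega$. Then $X$ is closed under $F$, and from $\chi_M(n_i)=f(n_i)=n_{i+1}$ we get $\ot(M\cap\aleph_{n_{i+1}})=\aleph_{n_i}$, that is $|X\cap\aleph_{n_{i+1}}|=\aleph_{n_i}$ for every $i$. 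This is precisely $\la\aleph_{n_{i+1}}\ra_{i<\omega}\chang\la\aleph_{n_i}\ra_{i<\omega}$.
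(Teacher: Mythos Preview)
Your argument is correct, but the pigeonhole step takes a genuinely different route from the paper's. You split the stationary set $S$ into at most $2^{\aleph_0}$ pieces $S_f$ and invoke the $\aleph_{\omega+1}$-completeness of the nonstationary ideal on $\p_{\aleph_{\omega+1}}(H_\theta)$ to conclude that some piece is stationary. The paper instead first arranges that $\chi_N\in N$: given a witness $M$, it passes to $N=\hull(M\cup\aleph_{m_0})$ where $2^{\aleph_0}=\aleph_{m_0}$, and uses Lemma~\ref{sups} to check that $\chi_N$ agrees with $\chi_M$ above $m_0$; since all reals lie in $N$, the map $N\mapsto\chi_N$ is regressive on the resulting stationary set, and Fodor's Theorem stabilizes it. Your approach is a bit more elementary in that it avoids Lemma~\ref{sups} and normality altogether, using only completeness. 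The paper's approach, on the other hand, makes explicit the role of the condition $\chi_N\in N$, which is precisely what the later Corollary shows can fail when $2^{\aleph_0}\geq\aleph_\omega$; so the paper's route is chosen to motivate that barrier result. Your derivation of the Chang's Conjecture at the end, setting $n_0$ to be the least $N$ with $f(N)>N$ and iterating $n_{i+1}=f(n_i)$, is a minor variant of the paper's choice (the paper takes $n_0$ to be the last fixed point and inserts an arbitrary $n_1<f(n_0+1)$ before iterating), and both are fine.
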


\begin{proof}
Suppose $\aleph_\omega$ is J\'{o}nsson and $2^{\aleph_0} = \aleph_{m_0}$.  Let $F : \aleph_\omega^{<\omega} \to \aleph_\omega$ and let $\lhd$ be a well-order of $H_{\aleph_\omega}$.  Let $M \prec (H_{\aleph_\omega},\in,\lhd,F)$ be of size $\aleph_\omega$ and such that $\aleph_\omega \nsubseteq M$.  
Let $N = \hull(M \cup \aleph_{m_0})$.  
We claim that $\chi_M(n) = \chi_N(n)$ for $n>m_0$.
  For if not, let $n>m_0$ be least such that $m= \chi_N(n) < \chi_M(n)$.  Then $\ot(N \cap \aleph_m) = \aleph_n$ and $\cf(M \cap \aleph_m) < \aleph_n$.  But by Lemma \ref{sups}, $\sup(M \cap \aleph_m) = \sup(N \cap \aleph_m)$, a contradiction.  Since $2^{\aleph_0} \subseteq N$, $\chi_N \in N$, and $N$ is closed under $F$.  Thus the following collection is stationary:
$$\{ N \prec H_{\aleph_\omega} : |N| = \aleph_\omega, \aleph_\omega \nsubseteq N, \text{ and } \chi_N \in N \}.$$
By Fodor's Theorem, there is a stationary $S$ contained in the above set and a function $f$ such that $\chi_N = f$ for all $N \in S$.

For the last claim, let $n_0$ be the largest $n$ such that $f(n) = n$, and let $n_1$ be such that $n_0 < n_1 < f(n_0 + 1)$.  For $i \geq 1$, if $n_i$ is given, let $n_{i+1} = f(n_i)$.  If $M \in S$, then $| M \cap \aleph_{n_1} | = \aleph_{n_0}$, and for $i \geq 1$, $| M \cap \aleph_{n_{i+1}} | = |M \cap \aleph_{f(n_i)}| = \aleph_{n_i}$.
\end{proof}

If $\aleph_\omega$ is $f$-J\'onsson, for what other functions $g$ is it $g$-J\'onsson?  We have seen above that we can alter $f$ by replacing an initial segment of it with the identity function:  If $\aleph_\omega$ is $f$-J\'onsson, then it is $(\id \restriction n \cup f \restriction (\omega \setminus n))$-J\'onsson for all $n$.  What about adjustments on an infinite set?


For functions in $\omega^\omega$, we define exponentiation of one function by another as a generalization of composition.  As usual, for positive integers $n$, $f^n = f \circ \dots \circ f$, where we compose $f$ with itself $n$ times.  We let $f^0 = \id$.  For functions $f,g \in \omega^\omega$, we define $f^g$ by $f^g(n) =  f^{g(n)}(n)$.

\begin{proposition}
Suppose $f,g,h \in\omega^\omega$, $f$ is increasing, and $g,h$ are monotone.
\begin{enumerate}
\item $f(n) \geq n$ for all $n$.
\item $f(n)-n$ is monotone.
\item $f^g$ is increasing.
\item $(f^g)^h = f^k$ for some monotone function $k$.
\end{enumerate}
\end{proposition}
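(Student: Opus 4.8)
The plan is to treat the four items in order, using (1)--(3) as tools for (4). Items (1) and (2) follow from the single observation that a strictly increasing $f : \omega \to \omega$ satisfies $f(n+1) \geq f(n) + 1$. For (1), an easy induction gives $f(n+1) > f(n) \geq n$, hence $f(n+1) \geq n+1$. For (2), the inequality $f(n+1) \geq f(n)+1$ rearranges to $f(n+1)-(n+1) \geq f(n)-n$, so $n \mapsto f(n)-n$ is monotone. Before attacking (3) and (4) I would record two facts about the finite iterates, each by induction on $k$: every $f^k$ is increasing (a composition of increasing maps, with $f^0 = \id$), and $f^k(m) \geq m$ for all $m$ (using (1): $f^{k+1}(m) = f(f^k(m)) \geq f^k(m) \geq m$).

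For (3), fix $n$ and put $a = g(n) \leq g(n+1) = b$, using monotonicity of $g$. I would factor $f^b(n+1) = f^{b-a}(f^a(n+1))$. Since $f^a$ is increasing we have $f^a(n+1) > f^a(n)$, and since $f^{b-a}(x) \geq x$ the outer map only raises the value. Chaining these, $f^g(n+1) = f^{b-a}(f^a(n+1)) \geq f^a(n+1) > f^a(n) = f^g(n)$, so $f^g$ is increasing.

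The crux, and the step I expect to be the main obstacle, is (4): describing the iterates of $F := f^g$. The key point is that applying $F$ repeatedly accumulates the exponents of $f$ along the $F$-orbit of $n$. I would prove by induction on $j$ that
\[ F^j(n) = f^{\sum_{i<j} g(F^i(n))}(n), \]
the inductive step using $F^{j+1}(n) = F(F^j(n)) = f^{g(F^j(n))}(F^j(n))$ together with $f^p \circ f^q = f^{p+q}$. Setting $k(n) = \sum_{i<h(n)} g(F^i(n))$ then yields $(f^g)^h(n) = F^{h(n)}(n) = f^{k(n)}(n)$, so $(f^g)^h = f^k$.

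It remains to verify that $k$ is monotone, where the hypotheses on $g$ and $h$ feed through (3). Since $F$ is increasing by (3), so is each iterate $F^i$, giving $F^i(n+1) \geq F^i(n)$; monotonicity of $g$ then yields $g(F^i(n+1)) \geq g(F^i(n))$ term by term. As $h$ is monotone, $h(n+1) \geq h(n)$, so the sum defining $k(n+1)$ has at least as many nonnegative terms as that defining $k(n)$; discarding the extra terms and comparing the rest term by term gives $k(n+1) \geq k(n)$. I expect the only real care to be in simultaneously tracking the two sources of growth in $k$: more terms, coming from $h$, and larger terms, coming from $g$ evaluated along the increasing orbit.
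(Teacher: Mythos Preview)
Your proof is correct and follows essentially the same approach as the paper: items (1)--(3) are handled identically, and for (4) both you and the paper show that $(f^g)^m(n) = f^{k_m(n)}(n)$ where $k_m(n)$ accumulates the $g$-values along the $F$-orbit, then set $k(n) = k_{h(n)}(n)$. The only cosmetic difference is that you write the closed form $k_m(n) = \sum_{i<m} g(F^i(n))$ directly, while the paper defines $k_m$ recursively via $k_{m+1}(n) = k_m(n) + g(f^{k_m(n)}(n))$ and checks monotonicity in each variable by a parallel induction; these are the same computation.
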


\begin{proof}
\hspace{1mm}
\begin{enumerate}
\item Suppose $f(n) \geq n$.  Then $f(n+1) \geq f(n)+1 \geq n+1$.
\item $f(n+1) - (n+1) \geq (f(n)+1) - (n+1) = f(n)-n$.
\item Suppose $n_0<n_1$.  Since $f$ is increasing, we inductively see that $f^m(n_0) < f^m(n_1)$ for all $m$.  Since $g$ is monotone, $f^g(n_0) = f^{g(n_0)}(n_0) < f^{g(n_0)}(n_1) \leq f^{g(n_1)}(n_1) = f^g(n_1)$.

\item We show by induction on $m$ that for all $n,m$, there is a number $k_m(n)$ such that $(f^g)^m(n) = f^{k_m(n)}(n)$, and that if either $n$ or $m$ is held fixed, then $k_m(n)$ is monotone function of the other variable.  Our desired function $k$ is then defined by $k(n) = k_{h(n)}(n)$.  For the base case $m = 0$,  this is true because $(f^g)^0 = \id$, so $k_0$ is the constant function with value 0.  Suppose this is true for $m$.  For each $n$,
\begin{align*}
(f^g)^{m+1} (n)	&= f^g \circ (f^g)^m(n) \\
			&= f^g \circ f^{k_m(n)}(n) \\
			&= \underbrace{f \circ \dots \circ f}_{g(f^{k_m(n)}(n)) \text{ times}} ( f^{k_m(n)}(n)).
\end{align*}
Thus $k_{m+1}(n) = k_m(n) + g(f^{k_m(n)}(n)) \geq k_m(n)$.  To show that $k_{m+1}$ is a monotone function of $n$, suppose $n_0 < n_1$.
\begin{align*}
k_{m+1}(n_0)	&= k_m(n_0) + g(f^{k_m(n_0)}(n_0)) \\
			&\leq k_m(n_1) + g(f^{k_m(n_0)}(n_0)) \\
			&\leq k_m(n_1) + g(f^{k_m(n_0)}(n_1)) \\
			&\leq k_m(n_1) + g(f^{k_m(n_1)}(n_1)) = k_{m+1}(n_1).
\end{align*}
\end{enumerate} 
\end{proof}

%

More generally, if $\vec f = \la f_i : i < \omega \ra \subseteq \omega^\omega$ and $g \in \omega^\omega$, we define $\vec f^g$ by 
$$\vec f^g(n) = f_0 \circ f_1 \circ \dots \circ f_{g(n)-1}(n).$$
For any $f,g \in \omega^\omega$, if we put $\vec f = \la f,f,f,\dots \ra$, then $f^g = \vec f^g$.  If $f,g \in \omega^\omega$, then $f \circ g$ is obtained by putting $\vec h = \la f,g,g,g,\dots \ra$ and $k = \la 2,2,2,\dots \ra$ and taking $\vec h^k$.

\begin{proposition}
Suppose $\vec f = \la f_i : i < \omega \ra \subseteq \omega^\omega$, each $f_i$ is increasing, and $g \in \omega^\omega$ is monotone.  Then $\vec f^g$ is increasing.
\end{proposition}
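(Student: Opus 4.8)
The plan is to disentangle the two distinct roles that $n$ plays in the formula $\vec f^g(n) = f_0 \circ f_1 \circ \dots \circ f_{g(n)-1}(n)$: it serves both as the argument fed into the innermost function and as the index $g(n)$ that determines how many of the $f_i$ get composed. To manage these separately, I would introduce the auxiliary single-variable compositions $G_j := f_0 \circ f_1 \circ \dots \circ f_{j-1}$ for each $j$, with the convention $G_0 = \id$, so that $\vec f^g(n) = G_{g(n)}(n)$. The point is to regard $G_j(x)$ as a two-parameter object and to vary the parameters $j$ and $x$ one at a time.

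The first step is to verify that for each fixed $j$, the map $G_j$ is (strictly) increasing. This follows by a short induction on $j$: $G_0 = \id$ is increasing, and since $G_{j+1} = G_j \circ f_j$, the inductive step is just the observation that a composition of two increasing functions is increasing. The second step is to show that for each fixed argument $x$, the value $G_j(x)$ is monotone nondecreasing as $j$ grows. Here I would invoke part (1) of the preceding proposition, which gives $f_j(x) \geq x$ because $f_j$ is increasing; combining this with the fact (from the first step) that $G_j$ is increasing, one gets
$$ G_{j+1}(x) = G_j(f_j(x)) \geq G_j(x), $$
as required.

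With these two facts in hand, the conclusion is a single chain of inequalities. Given $n_0 < n_1$, monotonicity of $g$ yields $g(n_0) \leq g(n_1)$, and then
$$ \vec f^g(n_0) = G_{g(n_0)}(n_0) < G_{g(n_0)}(n_1) \leq G_{g(n_1)}(n_1) = \vec f^g(n_1), $$
where the strict inequality is the first step applied to $n_0 < n_1$ and the weak inequality is the second step applied to $g(n_0) \leq g(n_1)$. The only real subtlety—the step I would flag as the main obstacle—is precisely that one cannot compare $\vec f^g(n_0)$ and $\vec f^g(n_1)$ directly, since both the argument and the depth of composition move at once; the auxiliary family $G_j(x)$ is exactly the device that lets me change the argument first (keeping the strictness) and then raise the depth (losing only weak monotonicity), which is enough since the first move is already strict.
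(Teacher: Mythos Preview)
Your proof is correct and follows essentially the same route as the paper's. The paper compresses your two steps into the single observation that for increasing $h,k$ one has $h(n_0) < h(k(n_1))$ whenever $n_0 < n_1$, and then takes $h = f_0 \circ \dots \circ f_{m_0}$ and $k = f_{m_0+1} \circ \dots \circ f_{m_1}$; your introduction of the auxiliary family $G_j$ and the explicit separation into ``vary the argument, then vary the depth'' simply unpacks this same idea.
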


\begin{proof}
Suppose $n_0<n_1$.  For any increasing functions $h,k$, we have $h(n_0)<h(k(n_1))$.  Since the composition of increasing functions is increasing, for any $m_0\leq m_1$, we have $f_{0} \circ \dots \circ f_{m_0}(n_0) < f_{0} \circ \dots \circ f_{m_1} (n_1)$.
\end{proof}

\begin{theorem}
Suppose $\vec f = \la f_i : i < \omega \ra$ is a sequence of functions such that $\aleph_\omega$ is $f_i$-J\'onsson for each $i$, and $g \in \omega^\omega$ is monotone.  Then $\aleph_\omega$ is $\vec f^g$-J\'onsson.  Let $s_1$ be the function that sends $0 \mapsto 0$ and $n \mapsto n+1$ for positive $n$.  If  $\aleph_\omega$ is $s_1$-J\'onsson, then it is $f$-J\'onsson for all increasing $f \in \omega^\omega$ such that $f(0) = 0$.
\end{theorem}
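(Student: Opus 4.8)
The plan is to reduce the theorem to a single ``composition step'' and then iterate it diagonally to accommodate the varying depth prescribed by $g$. First I would isolate the step: if $\aleph_\omega$ is $a$-J\'onsson and $b$-J\'onsson (with $a,b$ increasing), then it is $(a\circ b)$-J\'onsson. Fix $F$ and a well-order $\lhd$ of $H_\theta$ for large $\theta$, and use $a$-J\'onssonness to find $M\prec(H_\theta,\in,\lhd,F)$ with $|M|=\aleph_\omega$ and $\chi_M=a$; here I use that the stationarity defining $a$-J\'onssonness, which concerns only the trace on $\aleph_\omega$, lifts from $[\aleph_\omega]^{\aleph_\omega}$ to $[H_\theta]^{\aleph_\omega}$. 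Let $\pi\colon M\to\bar M$ be the transitive collapse. Since $\chi_M=a$ is increasing and unbounded, $\pi(\aleph_{a(n)})=\aleph_n$ and $\pi(\aleph_\omega)=\aleph_\omega$, so $\bar M$ is transitive with $\aleph_\omega\subseteq\bar M$ and $|\bar M|=\aleph_\omega$. The key point is that, because $\aleph_\omega\subseteq\bar M$ and $|\bar M|=\aleph_\omega$, the $V$-statement ``$\aleph_\omega$ is $b$-J\'onsson'' directly yields that $\{Y\in[\bar M]^{\aleph_\omega}:\chi_Y=b\}$ is stationary, again by trace-lifting; in particular there is $\bar N\prec(\bar M,\in,\pi(\lhd),\pi(F))$ with $\chi_{\bar N}=b$. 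Setting $N=\pi^{-1}[\bar N]$ gives $N\prec(H_\theta,\in,\lhd,F)$, and composing the two order-isomorphisms gives $\ot(N\cap\aleph_{a(b(n))})=\ot(\bar N\cap\aleph_{b(n)})=\aleph_n$, i.e. $\chi_N=a\circ b$. Thus no reflection of J\'onssonness is needed---only that stationarity of a trace condition survives passage to any $W\supseteq\aleph_\omega$ of size $\aleph_\omega$.

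Next I would iterate, handling the depth profile $g$ by truncation. Put $n^*_i=\min\{n:g(n)>i\}$ and $\tilde f_i=\id\restriction n^*_i\cup f_i\restriction(\omega\setminus n^*_i)$; by the remark following Silver's theorem, $\aleph_\omega$ is $\tilde f_i$-J\'onsson. I would build a decreasing chain $\la M_i:i<\omega\ra$ of elementary submodels of $(H_\theta,\in,\lhd,F)$, each of size $\aleph_\omega$, with $M_0\supseteq\aleph_\omega$ and $\chi_{M_i}(n)=f_0\circ\dots\circ f_{\min(i,g(n))-1}(n)$, applying the composition step at stage $i$ with $b=\tilde f_i$ so that $\chi_{M_{i+1}}=\chi_{M_i}\circ\tilde f_i$. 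A direct case check (splitting on whether $g(n)\le i$) shows this recursion produces exactly the stated $\chi_{M_i}$, and for each fixed $n$ one has $\chi_{M_i}(n)=\vec f^g(n)$ once $i\ge g(n)$. Because $\tilde f_i$ is the identity below $n^*_i$, the submodel $M_{i+1}$ agrees with $M_i$ below $\aleph_{\chi_{M_i}(n^*_i-1)}=\aleph_{\vec f^g(n^*_i-1)}$, and since $g$ is monotone these thresholds tend to $\aleph_\omega$; hence on each $\aleph_m$ the traces $M_i\cap\aleph_m$ stabilize. I would then set $N=\bigcap_iM_i$: it is elementary because, all $M_i$ containing the fixed parameters, the $\lhd$-least witness to any existential formula is common to all of them and so lies in $N$, and $\chi_N=\vec f^g$ follows from trace-stabilization together with $\chi_{M_i}(n)=\vec f^g(n)$ for $i\ge g(n)$.

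The main obstacle is precisely this diagonal bookkeeping: arranging the truncations $\tilde f_i$ so that the $\chi_{M_i}$ telescope to $\vec f^g$ while the traces freeze below higher and higher $\aleph_m$, and confirming that the infinite intersection remains elementary (handled by working over $\lhd$). The single composition step and its two ingredients---trace-lifting of stationarity and composition of transitive collapses---are comparatively routine; the care is all in keeping the identity-blocks and the $f_i$-blocks aligned with the profile $g$.

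For the final assertion I would specialize to $\vec f=\la s_1,s_1,\dots\ra$, so that $\vec f^g=s_1^g$. Since $s_1(0)=0$ and $s_1(n)=n+1$ for $n>0$, one has $s_1^k(n)=n+k$ for $n>0$ and $s_1^k(0)=0$, whence $s_1^g(n)=n+g(n)$ for $n>0$ and $s_1^g(0)=0$. Given any increasing $f$ with $f(0)=0$, put $g(n)=f(n)-n$; this is monotone by part (2) of the Proposition on exponentiation, and $g(0)=0$. Then $s_1^g=f$, so the first part of the theorem, applied to the $s_1$-J\'onsson hypothesis and this $g$, gives that $\aleph_\omega$ is $f$-J\'onsson.
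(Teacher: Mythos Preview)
Your argument is correct and follows essentially the same route as the paper's proof: isolate the basic composition step via transitive collapse and pull-back, iterate it along a decreasing chain of elementary submodels using suitably truncated versions of the $f_i$, and take the intersection. The only organizational difference is that the paper groups the compositions into blocks of length $g(i{+}1)-g(i)$ (after first normalizing $g$ to take positive values), whereas you perform one composition per stage with the truncation $\tilde f_i=\id\restriction n^*_i\cup f_i\restriction(\omega\setminus n^*_i)$; your recursion $\chi_{M_{i+1}}=\chi_{M_i}\circ\tilde f_i$ and the verification that $\chi_{M_i}(n)=f_0\circ\dots\circ f_{\min(i,g(n))-1}(n)$ are a clean repackaging of the same computation. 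Your explicit remark that the intersection is elementary because the $\lhd$-least witnesses are common to all $M_i$ is a point the paper leaves implicit (it only asserts closure under $F$), and your derivation of the $s_1$ claim via $g(n)=f(n)-n$ is identical to the paper's.
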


\begin{proof}
Let $F : \aleph_\omega^{<\omega} \to \aleph_\omega$, let $\theta \geq \aleph_\omega$, and let $M \prec (H_\theta,\in,F)$ be of size $\aleph_\omega$ with $\aleph_\omega \subseteq M$.  We may assume that $g$ is not the constant function with value 0, since in that case the conclusion holds trivially.  If $n_0$ is the least $n$ such that $g(n)>0$, then defining $g'$ such that $g'(n) = 1$ for $n < n_0$ and $g'(n) = g(n)$ for $n \geq n_0$, and putting $f'_i = \id \restriction n_0 \cup f_i \restriction (\omega \setminus n_0)$ for each $i$, we have that $(\vec f')^{g'} = \vec f^g$.  By Lemma \ref{sups}, $\aleph_\omega$ is $f'_i$-J\'onsson for each $i$.  Thus it suffices assume $g$ takes only positive values.

Let $N_{0,0} \prec M$ be such that $\chi_{N_{0,0}} = f_0$.  If $\pi : N_{0,0} \to \bar N_{0,0}$ is the transitive collapse, then we can take $N_{0,1}^* \prec \bar N_{0,0}$ such that $\chi_{N_{0,1}^*} = f_1$.  Then setting $N_{0,1} = \pi^{-1}[N_{0,1}^*]$, we have that $\chi_{N_{0,1}} = f_0 \circ f_1$.  Continuing in this way up to $g(1)-1$, we have a model $N_0 = N_{0,g(1)-1} \prec M$ such that $\chi_{N_0} = f_0 \circ \dots \circ f_{g(1)-1}$, so $\chi_{N_0}(1)= \vec f^g(1)$.  Since $f_i(0) = 0$ for all $i$, $\chi_{N_0}(0) = \vec f^g(0)$.

Let $\pi : N_0 \to \bar N_0$ be the transitive collapse.  Since $\aleph_\omega$ is $f_{g(1)}$-J\'onsson, it is $(\id \restriction 2 \cup f_{g(1)} \restriction (\omega \setminus 2))$-J\'onsson by Lemma \ref{sups}.  Thus we can take $N_{1,g(1)}^* \prec \bar N_0$ witnessing this.  We continue as in previous paragraph up to $g(2) -1$, obtaining $N^*_{1,g(2)-1} \prec \bar N_0$ such that $\chi_{N^*_{1,g(2)-1}} = \id \restriction 2 \cup (f_{g(1)} \circ \dots \circ f_{g(2) - 1}) \restriction (\omega \setminus 2)$.  Then we let $N_1 = \pi^{-1}[N^*_{1,g(2)-1}] \prec N_0$, and we have that for all $i > 1$,
$$\chi_{N_1}(i) = f_0 \circ \dots \circ f_{g(1)-1} \circ  f_{g(1)} \circ \dots \circ  f_{g(2)-1}(i).$$  
We also have that $N_1 \cap \aleph_{\vec f^g(1)} = N_0 \cap \aleph_{\vec f^g(1)}$, since $\aleph_1 \subseteq N^*_{1,i}$ for $g(1) \leq i < g(2)$.  In particular, $\chi_{N_1}(1) = \chi_{N_0}(1)$.

We continue in this way, producing a decreasing sequence of elementary submodels $M \succ N_0 \succ N_1 \succ N_2 \succ \dots$ such that the intersections with the $\aleph_n$'s stabilizes:  For all $i>0$, $N_i \cap \aleph_{\vec f^g(i)} = N_{i-1} \cap \aleph_{\vec f^g(i)}$.  By following the same procedure as in the above paragraphs, we ensure that for $j>i$, $\chi_{N_i} (j) = f_0 \circ \dots \circ f_{g(i+1)-1}(j).$
Let $N_\omega = \bigcap_{i<\omega} N_i$.  Then $N_\omega$ is closed under $F$, $|N_\omega| = \aleph_\omega$, and for all $i$, $\chi_{N_\omega}(i) = \vec f^g(i)$.

For the claim about the function $s_1$, note that for any increasing $f$ such that $f(0) = 0$, we can define $g(n) = f(n)-n$, so that $f = s_1^g$.
\end{proof}

The above result produces a barrier to generalizing Silver's Theorem.  In broad strokes, the argument for Silver's Theorem works by taking a stationary set $S$ of elemenatary submodels of $H_{\aleph_\omega}$
and somehow finding a stationary $S' \subseteq S$ such that the function $M \mapsto \chi_M$ is constant on $S'$.  Of course in this situation, $\chi_M \in M$ for almost all $M \in S'$.  But this procedure cannot necessarily be carried out if the continuum is large enough:

\begin{corollary}
Suppose $\aleph_\omega$ is $f$-J\'onsson, and there is $m$ such that $m<f(m)$ and $\aleph_{m}\leq 2^{\aleph_0}$.  Then 
$\{ M \prec H_{\aleph_\omega} : \chi_M \notin M \}$
is stationary.
\end{corollary}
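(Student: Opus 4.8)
The plan is to use the preceding theorem to manufacture an injective, $\aleph_m$-indexed family of distinct ``J\'onsson types,'' to code this family into the structure, and thereby to reduce the corollary to a single omission statement that I resolve by reflection. First I would fix, using $\aleph_m \leq 2^{\aleph_0}$, an injection $\alpha \mapsto g_\alpha$ from $\aleph_m$ into the monotone functions of $\omega^\omega$ with each $g_\alpha(m) \geq 1$, and set $h_\alpha = f^{g_\alpha}$. By the preceding theorem $\aleph_\omega$ is $h_\alpha$-J\'onsson for every $\alpha$. Since $k \mapsto f^k(m)$ is strictly increasing (this uses $m < f(m)$ together with part (1) of the Proposition), the value $f^{g_\alpha}(j)$ determines $g_\alpha(j)$ for each $j \geq m$, so $\alpha \mapsto h_\alpha$ is injective, and each $h_\alpha$ satisfies $h_\alpha(m) = f^{g_\alpha(m)}(m) \geq f(m) > m$. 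There are $2^{\aleph_0} \geq \aleph_m$ monotone functions, so such a family exists.

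I would then place the injective sequence $\vec h = \langle h_\alpha : \alpha < \aleph_m\rangle$ into the structure, together with $F$, a well-order $\lhd$ of $H_{\aleph_\omega}$, and the Skolem functions of Lemma \ref{sups}; note $\vec h \in H_{\aleph_\omega}$ since it has size $\aleph_m$. For any $M$ elementary in this structure, injectivity of $\vec h$ gives $h_\alpha \in M$ if and only if $\alpha \in M$; and whenever $\chi_M = h_\alpha$, the inequality $h_\alpha(m) > m$ forces $|M \cap \aleph_m| < \aleph_m$, so $\aleph_m \not\subseteq M$. These observations reduce the corollary to the following claim: for every club (equivalently, every $F$) there are $\alpha < \aleph_m$ and $M$ in the club with $\chi_M = h_\alpha$ and $\alpha \notin M$. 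Indeed $\alpha \notin M$ then gives $h_\alpha \notin M$, that is, $\chi_M \notin M$.

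Now comes the difficulty. Each $\{M : \chi_M = h_\alpha\}$ is stationary, so for any \emph{fixed} $\alpha$ there is an $M$ in the club realizing $h_\alpha$; but for a fixed $\alpha$ the set $\{M : \alpha \notin M\}$ need not be stationary (a club could force $\alpha$ into every model), and worse, the index $\alpha$ we must \emph{avoid} is precisely the one \emph{built into} the model through $\chi_M = h_\alpha$. To break this circularity I would choose $\alpha$ by reflection rather than by hand. Using $\lhd$, make the construction from the preceding theorem canonical, so that $\alpha \mapsto M_\alpha$ (a model with $\chi_{M_\alpha} = h_\alpha$) is definable, and arrange the indexing $\alpha \mapsto g_\alpha$ to be continuous below $\aleph_m$, so that $\alpha \mapsto M_\alpha \cap \aleph_m$ is continuous as well. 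Then, taking an elementary submodel $W$ of a large $H_\theta$ that contains all of this data and whose trace $\delta := W \cap \aleph_m$ is an ordinal below $\aleph_m$, continuity makes $\delta$ a closure point of $\alpha \mapsto M_\alpha \cap \aleph_m$, whence $M_\delta \cap \aleph_m \subseteq \delta$ and so $\delta \notin M_\delta$. Setting $M = M_\delta$ and $\alpha = \delta$ finishes the argument.

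The main obstacle is exactly this last step: securing an index $\delta$ genuinely omitted from the model that realizes $h_\delta$. Everything preceding it is bookkeeping, but making the reflection go through requires setting up both the indexing $\alpha \mapsto g_\alpha$ and the model-construction $\alpha \mapsto M_\alpha$ continuously enough that a reflection point of $\alpha \mapsto M_\alpha \cap \aleph_m$ exists and lands below $\delta$. It is here that the two hypotheses do their work: $m < f(m)$ guarantees $\aleph_m \not\subseteq M_\delta$, hence room below $\aleph_m$ to omit $\delta$, while $\aleph_m \leq 2^{\aleph_0}$ supplies the $\aleph_m$-many distinct types needed to index by ordinals below $\aleph_m$ in the first place. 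This is precisely the point at which the pressing-down step of Silver's argument cannot be reproduced.
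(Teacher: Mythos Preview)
Your coding idea---using $m<f(m)$ to make $k\mapsto f^k(m)$ strictly increasing so that $h_\alpha=f^{g_\alpha}$ determines $g_\alpha$ above $m$, and using $\aleph_m\le 2^{\aleph_0}$ to get $\aleph_m$ many such codes---is exactly the right mechanism, and it is the same mechanism the paper uses. The gap is in your final ``reflection'' step. You assert that one can arrange $\alpha\mapsto g_\alpha$ ``continuously'' so that $\alpha\mapsto M_\alpha\cap\aleph_m$ is continuous, and then conclude $M_\delta\cap\aleph_m\subseteq\delta$ at a reflection point $\delta=W\cap\aleph_m$. But the construction in the preceding theorem builds $M_\alpha$ by a countable iteration of Skolem hulls and transitive collapses; there is no reason the output $M_\alpha\cap\aleph_m$ should depend continuously (or even monotonically) on the input $g_\alpha$, and the models $M_\alpha$ for distinct $\alpha$ are not nested. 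Moreover, even granting that the map $\alpha\mapsto M_\alpha$ lies in $W$, from $\alpha\in W$ you only get $M_\alpha\cap\aleph_m\in W$ as an \emph{element}; since this set has size up to $\aleph_{m-1}$ while $|W|$ could be small, you cannot conclude $M_\alpha\cap\aleph_m\subseteq W$, so the closure-point argument does not go through. You have correctly identified this as the crux, but the sketch does not resolve it.

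The paper sidesteps the whole difficulty by reversing the order of quantifiers: rather than fixing the family $\langle h_\alpha:\alpha<\aleph_m\rangle$ in advance and then searching for an $\alpha$ omitted by its own model, one first takes any $M$ closed under $F$ with $\chi_M=f$. If $f\notin M$ we are done. Otherwise $|M\cap\aleph_m|<\aleph_m\le 2^{\aleph_0}$, so one can choose a real $r\notin M$ \emph{after} seeing $M$, encode $r$ as a monotone $g$, and use the preceding theorem inside the transitive collapse of $M$ to produce $N\prec M$ with $\chi_N=f\circ f^g=f^{g+1}$. Since $f\in N$ and $\chi_N$ together with $f$ determines $g$ above $m$ (by your own injectivity observation), $\chi_N\in N$ would put $r\in N\subseteq M$, a contradiction. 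The point is that choosing the code after the model makes the omission automatic, eliminating any need for reflection or continuity.
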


\begin{proof}
Let $F : \aleph_\omega^{<\omega} \to \aleph_\omega$ and let $M \prec (H_{\aleph_\omega},\in,F)$ be such that $|M|= \aleph_\omega$ and $\chi_M = f$.  If $f \notin M$, we are done.  Otherwise, let $\pi : M \to \bar M$ be the transitive collapse.  By hypothesis, there is $m$ such that $\pi(\aleph_m) < \aleph_m \leq 2^{\aleph_0}$, so there is $r \in 2^\omega \setminus M$.  Recursively define a monotone function $g$ by $g(0) = 0$, and $g(i) = g(i-1)+r(i)$ for $i >0$.  Let $N^* \prec \bar M$ be such that $f \in N^*$ and $\chi_{N^*} = f^g$.  Let $N = \pi^{-1}[N^*]$.  Then $\chi_N = f \circ f^g$.  For each $n$, $f \circ f^g (n) = f^{g(n)+1}(n)$.  For $n \geq m$, $f(n)>n$, and from $f$ and $f \circ f^g$ we can compute the unique value $i$ such that $f^i(n) = f \circ f^g(n)$.  Thus we can compute the values of $g$ above $m$, and from this recover the real number $r$.  Since $r \notin M \supseteq N$, $\chi_N \notin N$.  Note that $N$ is closed under $F$.
\end{proof}

If $f \in \omega^\omega$ is increasing, let 
$$[f] = \{ f^g : g \in \omega^\omega \text{ is monotone}\}.$$
Then $[f]$ is a closed subset of the Baire space.  This is because if $h \notin [f]$, then there must be some $n$ such that $h(n) \not= f^m(n)$ for any $m$.  Any function $h'$ such that $h'(n) = h(n)$ is also not in $[f]$, so there is a basic open set containing $h$ and disjoint from $[f]$.  Now if we choose continuum-many almost-disjoint subsets of $\omega$, $\{ X_\alpha : \alpha < 2^{\aleph_0} \}$, and let $f_\alpha$ be the increasing enumeration of $X_\alpha$, then for any $\alpha$ and any $h \in [f_\alpha]$, $\ran(h) \subseteq \ran(f_\alpha)$.  Thus $\{ [f_\alpha] : \alpha < 2^{\aleph_0} \}$ is a collection of pairwise disjoint closed sets.  The following question naturally arises:
\begin{question}
Suppose $\aleph_\omega$ is J\'onsson and let $\theta \geq \aleph_\omega$ be a cardinal.  What is the descriptive complexity of the set $\{ \chi_M : M \prec H_\theta, |M \cap \aleph_\omega| = \aleph_\omega \}$?
What can we say when $2^{\aleph_0} < \aleph_\omega$?
\end{question}

\bibliographystyle{amsplain.bst}
\bibliography{jonsson.bib}

\end{document}